\documentclass[a4paper,reqno]{amsart}
\usepackage[utf8]{inputenc}
\usepackage{graphicx}
\usepackage{amsmath}
\usepackage{amssymb,amsthm}
\usepackage{mathtools}
\usepackage{etoolbox}
\usepackage[T1]{fontenc}
\usepackage{pgfplots}
\usepackage{hyperref,enumerate}
\usepackage{enumitem}

\setlist[enumerate]{label=\alph*),leftmargin=2\parindent}

\theoremstyle{plain}
\newtheorem{theorem}{Theorem}
\numberwithin{theorem}{section}
\newtheorem{prop}[theorem]{Proposition}

\theoremstyle{definition}

\allowdisplaybreaks

\newcommand{\Z}{\mathbb{Z}}

\newcommand{\R}{\mathbb{R}}
\newcommand{\C}{\mathbb{C}}
\newcommand{\T}{\mathbb{T}}
\newcommand{\RT}{{\R\times\T}}

\renewcommand{\phi}{\varphi}
\renewcommand{\epsilon}{\varepsilon}
\newcommand{\eps}{\epsilon}

\DeclarePairedDelimiterX\norm[1]\lVert\rVert{\ifblank{#1}{\cdot}{#1}}
\DeclarePairedDelimiterX\abs[1]\lvert\rvert{\ifblank{#1}{\cdot}{#1}}

\DeclarePairedDelimiter\set\{\}

\DeclarePairedDelimiterX\intcc[1][]{\ifblank{#1}{0,1}{#1}}
\DeclarePairedDelimiterX\intco[1][){\ifblank{#1}{0,\infty}{#1}}
\DeclarePairedDelimiterX\intoc[1](]{\ifblank{#1}{-\infty,0}{#1}}
\DeclarePairedDelimiterX\intoo[1](){\ifblank{#1}{0,\infty}{#1}}

\newcommand{\Fcal}{\mathcal{F}}

\newcommand{\Ucal}{\mathcal{U}}

\newcommand{\ew}{\newpage\noindent}

\newcommand{\les}{\lesssim}

\begin{document}

\title[improved LWP for quartic ZK on $\RT$]{A note on the well-posedness of the quartic Zakharov-Kuznetsov equation on $\RT$}
\author{Jakob Nowicki-Koth}

\address{J.~Nowicki-Koth: Mathematisches Institut der
Heinrich-Heine-Universit{\"a}t D{\"u}sseldorf, Universit{\"a}tsstr. 1,
40225 D{\"u}sseldorf, Germany}
\email{jakob.nowicki-koth@hhu.de}

\begin{abstract}
By using a bilinear smoothing estimate recently developed in \cite{JNK2025}, together with several linear Strichartz-type estimates established therein, we improve the threshold for local well-posedness of the quartic Zakharov-Kuznetsov equation and prove that it is locally well-posed in $H^s(\RT)$ for all $s > \frac{1}{2}$.
\end{abstract}

\maketitle

\section{Introduction}

For $s \in \R$ and initial data $u_0 \in H^s(\RT)$, we consider the Cauchy problem

\begin{equation} \tag{CP} \label{CP}
\partial_t u + \partial_x \Delta_{xy}u = \pm \partial_x \left( u^4 \right), \qquad u(t=0) = u_0,
\end{equation}

associated with the quartic Zakharov-Kuznetsov equation ($3$-gZK). The equation itself constitutes a generalization of other well-known equations in two distinct ways. On the one hand, it can be viewed as a two-dimensional analogue of the quartic Korteweg-de Vries equation, for which the well-posedness theory has been extensively studied in the past; see, for instance, \cite{Kenig1993WellposednessAS, Grünrock2001ABA, GrünrockGWPgKdV3, TaogKdV3, BourgainKDV, StaffilanigKdV, IteamgKdV}, as well as the more recent global well-posedness result at low regularity in \cite{Simao2024}.
On the other hand, $3$-gZK is a higher order generalization of the Zakharov-Kuznetsov equation (with quadratic nonlinearity), which has been derived on both $\R^3$ and $\R^2$ as a model for the propagation of nonlinear waves in plasmas \cite{Zakharov1974, Laedke1982}. \\
We briefly review the development of the well-posedness theory for $3$-gZK, both in the non-periodic and semiperiodic settings: On $\R^2$, Linares and Pastor \cite{LinaresPastorqZK2011} initiated the study by proving local well-posedness in $H^s(\R^2)$ for $s > \frac{3}{4}$, global well-posedness for $s \geq 1$ under an additional smallness assumption on the $H^1$-norm of the initial data, and ill-posedness for initial data in $\dot{H}^{\frac{1}{3}}(\R^2)$, in the sense that the data-to-solution map fails to be uniformly continuous at this scaling-critical regularity.
By further optimizing the interplay between the well-known Kato smoothing effect and two maximal function estimates, Ribaud and Vento \cite{RibaudVento2012} improved the local result to $s>\frac{5}{12}$, and Grünrock \cite{Grünrock2015OnTG} subsequently closed the remaining gap of $\frac{1}{12}$ of a derivative to the scaling threshold by proving local and small data global well-posedness in homogeneous Besov spaces $\dot{B}^\frac{1}{3}_{2,q}$, using a newly developed maximal function estimate within the framework of the function spaces $U^p$ and $V^p$. \\
Regarding the semiperiodic case, Farah and Molinet \cite{FarahMolinetgZK} established local well-posedness in $H^s(\RT)$ for all $s>1-$ by developing a linear $L^4$-estimate and using it in conjunction with a bilinear smoothing estimate due to Molinet and Pilod \cite{MolinetPilodbilin}. This result, again under a smallness assumption on the $H^1$-norm of the initial data, implied global well-posedness for all $s \geq 1$, and just recently, the local result was improved to $s> \frac{8}{15}$ \cite{JNK2025} by developing novel linear $L^4$- and $L^6$-Strichartz-type estimates and using them in the context of a fixed-point argument in Bourgain's $X_{s,b}$ spaces.   \\
In this short note, we build on the results from \cite{JNK2025} and further refine the local  theory by additionally making use of a bilinear smoothing estimate developed therein. This allows us to lower the regularity threshold to $s > \frac{1}{2}$ and the precise statement of our result is given in

\begin{theorem} \label{LWP3gZK}
The Cauchy problem $\mathrm{\eqref{CP}}$ for the quartic Zakharov-Kuznetsov equation is locally well-posed for every $s > \frac{1}{2}$. That is, for each $s > \frac{1}{2}$ and every $u_0 \in H^s(\RT)$, there exist a lifespan $\delta = \delta(\norm{u_0}_{H^{\frac{1}{2}+}}) > 0$ and a unique solution
\[ u \in X_{s,\frac{1}{2}+}^\delta(\phi) \]
to $\mathrm{\eqref{CP}}$. Moreover, for every $\tilde{\delta} \in \intoo{0,\delta}$, there exists a neighborhood \ $\Ucal \subseteq H^s(\RT)$ of $u_0$ such that the data-to-solution map
\[ S: H^s(\RT) \supseteq \Ucal \rightarrow X_{s,\frac{1}{2}+}^{\tilde{\delta}}(\phi), \qquad v_0 \mapsto S(v_0) \coloneqq v \]
is smooth.
\end{theorem}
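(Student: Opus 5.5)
The plan is the standard contraction argument in Bourgain's spaces $X_{s,b}(\phi)$, with $\phi(\xi,q)=\xi(\xi^2+q^2)$ the phase of $\partial_x\Delta_{xy}$ on $\RT$. Once we have the multilinear estimate
\begin{equation}\label{quadri}
\norm{\partial_x(u_1u_2u_3u_4)}_{X_{s,-\frac12+2\epsilon}} \les \prod_{j=1}^4 \norm{u_j}_{X_{s,\frac12+\epsilon}}, \qquad s>\tfrac12,
\end{equation}
the rest is routine. The linear estimates in $X_{s,b}$ and the time-localisation factor $\delta^{\epsilon}$ turn the Duhamel map into a contraction on a ball in $X^\delta_{s,\frac12+}$, with lifespan depending only on $\norm{u_0}_{H^{\frac12+}}$; a persistence argument gives this, since \eqref{quadri} is applied with one factor in $H^s$ and three in $H^{\frac12+}$. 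Uniqueness in $X^\delta_{s,\frac12+}$ and smoothness of the data-to-solution map follow from the analytic (polynomial) dependence of the fixed-point map and the implicit function theorem. The whole task is therefore \eqref{quadri}.

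To prove \eqref{quadri}, I would dualise against $u_0\in X_{-s,\frac12-2\epsilon}$ and decompose all five functions dyadically in frequency, $N_0,\dots,N_4$. By symmetry we may assume $N_1\ge N_2\ge N_3\ge N_4$, and then $N_0\les N_1$. The derivative $\partial_x$ and the weight $\langle N_0\rangle^s$ must be absorbed by $\langle N_1\rangle^s$ together with smoothing gains. I would pair the functions bilinearly:
\[
\abs{\int u_0u_1u_2u_3u_4} \le \norm{P_{N_0}u_0\,P_{N_4}u_4}_{L^2}\,\norm{P_{N_1}u_1\,P_{N_2}u_2\,P_{N_3}u_3}_{L^2}.
\]
When $N_1\gg N_4$, the first factor, or alternatively $\norm{u_1u_4}_{L^2}$, is bounded by the bilinear smoothing estimate of \cite{JNK2025}. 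This estimate gains roughly a full derivative of the high frequency when the frequencies are separated, at least away from a resonant set of small measure. The remaining triple product I would control with the $L^6$-Strichartz estimate of \cite{JNK2025}, giving $\norm{u_1u_2u_3}_{L^2}\le\prod\norm{u_j}_{L^6}$, or with $L^4\times L^\infty$ where the $L^\infty$ factor is handled by Sobolev at low frequency $N_3$. This case arrangement is exactly where the extra bilinear gain over the argument in \cite{JNK2025} should push the threshold from $\frac{8}{15}$ down to $\frac12$.

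The \textbf{main obstacle} is the high-high interaction, $N_1\sim N_2\gtrsim N_0$ with all frequencies comparable. There the bilinear smoothing gives no separation gain, and the derivative loss $N_0$ has to be recovered from modulation. In this regime I would use the resonance function
\[
\phi(\xi_0,q_0)-\sum_{j=1}^{4}\phi(\xi_j,q_j),
\]
which for four factors is not factorisable. I would first split according to whether the largest modulation is $\gtrsim N_1^{3}$-ish, or at least $\gtrsim N_0N_1^2$ in the nondegenerate region, so that the weight $\langle\tau-\phi\rangle^{\frac12}$ can pay for $\partial_x$. In the complementary region, where the frequencies nearly lie on the resonant set, I would use the bilinear estimate in its form that is sensitive to the angle between the frequencies, i.e.\ the transversality $\abs{\nabla\phi(\xi_1,q_1)-\nabla\phi(\xi_2,q_2)}$. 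The last step is to check that the $L^4$-estimate (which loses about $\frac14$-derivative on $\RT$) combined with this gain leaves exactly $\frac12+$ derivatives. If the bookkeeping fails in some subcase, I would try redistributing the derivative symmetrically with an interpolation between the $L^4$ and $L^6$ bounds.
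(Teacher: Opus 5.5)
You have a genuine gap. The reduction to the quadrilinear estimate by a contraction argument is fine and matches the paper. The problem is the regime you yourself flag as the main obstacle, where all frequencies have size $\sim N$.

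First, dyadic comparability does not switch off the Molinet--Pilod gain. Estimate \eqref{MP} gains $\abs{\abs{(\xi_i,q_i)}^2-\abs{(\xi_j,q_j)}^2}^{1/2}$, where $\abs{(\xi,q)}^2=3\xi^2+q^2=\partial_\xi\phi$. This is still $\sim N$ whenever some pair has squared-norm difference $\gtrsim N^2$, including pairs involving the output $(\xi_0,q_0)$, and the paper disposes of that case directly. The truly bad set is where all five quantities $3\xi_i^2+q_i^2$ agree up to $o(N^2)$. There your modulation split gives nothing. For example (with $N$ even), take inputs $(\frac N2,\frac N2),(-\frac N2,\frac N2),(0,-N),(\frac N2,\frac N2)$ and output $(\frac N2,\frac N2)$. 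This configuration is exactly resonant, all five frequencies have $3\xi^2+q^2=N^2$, and yet $\abs{\xi_0}=\frac N2$ must still be recovered. Everything then rests on the ``angle-sensitive'' bilinear estimate you invoke, which you neither state precisely nor prove. The bilinear tools at hand do not supply it: \eqref{MP} only sees $\partial_\xi\phi$, which is exactly what degenerates here, and the transversality in the periodic $q$-direction is not something a standard bilinear estimate on $\RT$ exploits. You also leave the counting open. Moreover, your premise that the $L^4$ estimate loses about $\frac14$ derivative is wrong: \eqref{L^4} loses only $\eps$ derivatives.

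The missing idea is to split this degenerate regime according to the $x$-frequencies $\abs{\xi_i}$.
\begin{itemize}
\item If one $\abs{\xi_i}$ dominates, the near-equality of all $3\xi_i^2+q_i^2$ together with $q_0=\sum_j q_j$ forces the $q$ of the dominant factor and some other $q_j$ to have the same sign. For that pair, $\abs{3(\xi_1+\xi_4)^2-(q_1+q_4)^2}\sim N^2\gg\abs{\xi_1+\xi_4}\sim N$. The product therefore lies in the range of $P^1$, and the bilinear refinement \eqref{bilinref} gains $N^{1/4}$. This is combined with the Airy $L^6$ bound on $f$ and the optimized $L^6$ bound on the remaining two factors.
\item If the two largest $\abs{\xi_i}$ are comparable, then $\abs{\xi_0}\lesssim\abs{\xi_3},\abs{\xi_4}$, hence $\abs{\xi_0}\lesssim N^{1/2}\abs{\xi_0}^{1/6}\abs{\xi_3}^{1/6}\abs{\xi_4}^{1/6}$. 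Three applications of \eqref{AiryL^6} to $f,u_3,u_4$ (each gaining $\abs{\xi}^{1/6}$ at the price of $\frac13$ derivative), plus the almost lossless \eqref{L^4} on $u_1,u_2$, close exactly for $s>\frac12$.
\end{itemize}
This second subcase is where the threshold $\frac12$ comes from. The improvement over $\frac{8}{15}$ comes from using \eqref{bilinref} in this degenerate regime, not from the separated-frequency regime as you suggest.
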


\subsection*{Acknowledgments}
This note is intended to be part of the author's PhD thesis, and he would like to express his sincere gratitude to his supervisor, Axel Grünrock, for his continued guidance.

\section{Notation, function spaces, and Strichartz-type estimates}
 In this preliminary section, we fix notational conventions, introduce the function spaces we will work with, and recall the known linear and bilinear Strichartz estimates that will be needed in the proof of Theorem \ref{LWP3gZK}. \\ \\
In the following, we denote by $\abs{x}_2^2 \coloneqq x_1^2+...+x_n^2$ the Euclidean norm and by $\langle x \rangle \coloneqq (1+\abs{x}_2^2)^\frac{1}{2}$ the Japanese bracket of a vector $x \in \R^n$, omitting the subscript when taking the modulus of a real or complex number. Moreover, for $(\xi,q) \in \R \times \Z$, we set
$\abs{(\xi,q)}^2 \coloneqq 3\xi^2+q^2$ to simplify notation, and for $a \in \R$ and particularly small $\eps >0$, we write $a+ \coloneqq a+\eps$ and $a- \coloneqq a-\eps$ for brevity\footnote{We also often write $\infty-$ to denote a very large positive real number.}.
If $a,b$ are positive real numbers, we write $a \les b$ if there exists a constant $c>0$ such that $a\leq cb$, $a \sim b$, if both $a \les b$ and $b \les a$ hold, and $a \ll b$ if $c$ can be chosen to be very close to zero. For an admissible function $f : \R \times \R \times \T \rightarrow \C$, we denote its space-time Fourier transform by $\widehat{f} = \widehat{f}(\tau,\xi,q)$, and partial Fourier transforms are denoted using subscripts, as in $\Fcal_xf$ or $\Fcal_{xy}f$. We also use the Fourier transform and its inverse to define the Riesz and Bessel potential operators of order $-s \in \R$:
\[ I^s \coloneqq \Fcal_{xy}^{-1} \abs{(\xi,q)}_2^s \Fcal_{xy} \quad \text{and} \quad  J^s \coloneqq \Fcal_{xy}^{-1} \langle (\xi,q) \rangle^s \Fcal_{xy}. \]
As with the Fourier transform, we use subscripts whenever the operators are meant to act with respect to a single variable only. \\
We equip the Sobolev spaces $H^s(\RT)$ with the norm $\norm{\cdot}_{H^s} \coloneqq \norm{J^s \cdot}_{L^2}$, while for $1 \leq p,q \leq \infty$, we endow the mixed $L^pL^q$ spaces such as $L_{tx}^pL_y^q$, $L_t^pL_{xy}^q$, and $L_{Ty}^pL_x^q$ with the usual nested $L^p$- and $L^q$-norms (see the notation in \cite{JNK2025}). Here, we use the subscript $t$ when the integration in the time variable is taken over all of $\R$, and $T>0$ when the time variable $t$ is restricted to the interval $\intcc{-T,T}$. If, moreover, $\phi(\xi,q) \coloneqq \xi(\xi^2+q^2)$ denotes the phase function associated with the linear part of $3$-gZK, then the norms of the well-known $X_{s,b}(\phi)$ spaces, for regularity indices $s,b \in \R$, are given by
\[ \norm{f}_{X_{s,b}(\phi)}^2 \coloneqq \int_{\R^2} \sum_{q \in \Z} \langle (\xi,q) \rangle^{2s} \langle \tau - \phi(\xi,q) \rangle^{2b} \abs{\widehat{f}(\tau,\xi,q)}^2 \ \mathrm{d}(\tau,\xi). \]
Lastly, for $\delta > 0$, we also introduce the time-restricted Bourgain spaces
\[ X_{s,b}^\delta(\phi) \coloneqq \set{f|_{\intcc{-\delta,\delta} \times \R \times \T} \ : \ f \in X_{s,b}(\phi)}, \]
endowed with the norm
\[ \norm{f}_{X_{s,b}^\delta(\phi)} \coloneqq \inf \set{ \norm{\tilde{f}}_{X_{s,b}(\phi)} \ : \ \tilde{f} \in X_{s,b}(\phi), \quad \tilde{f}|_{\intcc{-\delta,\delta} \times \R \times \T} = f}, \]
as these will be the spaces in which we seek local solutions to \eqref{CP}.
For notational convenience, we suppress the dependence of the phase function, and we now collect the Strichartz-type estimates that will be needed later. \\
We begin by recalling the bilinear smoothing estimate
\begin{equation} \label{MP}
\norm{MP(u,v)}_{L_{txy}^2} \les_{\eps, b} \norm{J_y^{\frac{1}{2}+\eps}u}_{X_{0,b}} \norm{v}_{X_{0,b}}, \qquad \eps > 0, \quad b > \frac{1}{2}
\end{equation}
with
\begin{align*} &\widehat{MP(u,v)}(\tau,\xi,q) \coloneqq \int_{\R^2} \sum_{q_1 \in \Z} \abs{\abs{(\xi_1,q_1)}^2 - \abs{(\xi-\xi_1,q-q_1)}^2}^{\frac{1}{2}} \\ & \cdot \widehat{u}(\tau_1,\xi_1,q_1) \widehat{v}(\tau-\tau_1,\xi-\xi_1,q-q_1) \ \mathrm{d}(\tau_1,\xi_1),
\end{align*}
which allows for a gain of up to $\frac{1}{2}-$ derivatives for widely separated wave numbers and originally goes back to Molinet and Pilod \cite{MolinetPilodbilin} (see Proposition 3.1 in \cite{JNK2025} for a proof of this particular version). As explained in Remark 3.2 of \cite{JNK2025}, this estimate can be interpolated with a trivial bilinear bound to obtain
\begin{equation} \label{MPdual}
\norm{MP(u,v)}_{L_{txy}^2} \les \norm{u}_{X_{\frac{1}{2}+,\frac{1}{2}-}} \norm{v}_{X_{0+,\frac{1}{2}-}},
\end{equation}
which will be useful for estimates by duality.
In addition to that, we have a collection of linear estimates at our disposal, all of which were established in \cite{JNK2025}. These include the almost optimal linear $L^4$-estimate
\begin{equation} \label{L^4}
\norm{u}_{L_{txy}^4} \les_{\eps, b} \norm{u}_{X_{\eps,b}}, \qquad \eps > 0, \quad b > \frac{1}{2},
\end{equation}
the Airy $L^6$-estimate
\begin{equation} \label{AiryL^6}
\norm{I_x^\frac{1}{6}u}_{L_{txy}^6} \les_{b} \norm{J_y^{\frac{1}{3}}u}_{X_{0,b}}, \qquad b> \frac{1}{2},
\end{equation}
and the optimized $L^6$-estimate
\begin{equation} \label{optimizedL^6}
\norm{u}_{L_{Txy}^6} \les_{T, \eps, b} \norm{u}_{X_{\frac{2}{9}+\eps,b}}, \qquad T, \eps > 0, \quad b > \frac{1}{2}.
\end{equation}
For later estimates involving duality, we interpolate the above bounds with the trivial bounds
\begin{equation} \label{trivialL^2}
\norm{u}_{L_{txy}^2} = \norm{u}_{X_{0,0}}
\end{equation}
and
\begin{equation} \label{trivialLinfty}
\norm{u}_{L_{txy}^\infty} \les_{\eps, b} \norm{u}_{X_{1+\eps,b}}, \qquad \eps > 0, \quad b > \frac{1}{2},
\end{equation}
respectively, thereby obtaining the estimates
\begin{equation} \label{L^4+}
\norm{u}_{L_{txy}^{4+}} \les \norm{u}_{X_{0+,\frac{1}{2}+}}, 
\end{equation}
\begin{equation} \label{L^4-}
\norm{u}_{L_{txy}^{4-}} \les \norm{u}_{X_{0+,\frac{1}{2}-}}, 
\end{equation}
\begin{equation} \label{AiryL^6+}
\norm{I_x^\frac{1}{6}u}_{L_{txy}^{6+}} \les \norm{u}_{X_{\frac{1}{3}+,\frac{1}{2}+}}, 
\end{equation}
\begin{equation} \label{AiryL^6-}
\norm{I_x^\frac{1}{6}u}_{L_{txy}^{6-}} \les \norm{u}_{X_{\frac{1}{3}+,\frac{1}{2}-}}, 
\end{equation}
\begin{equation} \label{optimizedL^6+}
\norm{u}_{L_{Txy}^{6+}} \les \norm{u}_{X_{\frac{2}{9}+,\frac{1}{2}+}},
\end{equation}
and
\begin{equation} \label{optimizedL^6-}
\norm{u}_{L_{Txy}^{6-}} \les \norm{u}_{X_{\frac{2}{9}+,\frac{1}{2}-}}.
\end{equation}
All of these estimates were already employed in \cite{JNK2025} to establish local well-posedness for $s > \frac{8}{15}$. In the present work, we additionally incorporate the bilinear refinement
\begin{equation} \label{bilinref}
\norm{I_x^\frac{1}{4}P^1(uv)}_{L_{txy}^2} \les_{\eps, b} \norm{u}_{X_{\eps,b}} \norm{v}_{X_{\eps,b}}, \qquad \eps > 0, \quad b> \frac{1}{2}
\end{equation}
of the linear estimate \eqref{L^4}, with
\[ P^1 \coloneqq \Fcal_{xy}^{-1} \chi_{\set{\abs{3\xi^2-q^2} \gtrsim \abs{\xi}, \ \abs{\xi} \gtrsim 1}} \Fcal_{xy}, \]
(see Proposition 3.11 in \cite{JNK2025}) into the forthcoming case-by-case analysis. This additional tool will prove useful because, in the case where it fails to apply, the resulting frequency regime will be such that the Airy $L^6$-estimate \eqref{AiryL^6} can be applied a total of three times in its optimal configuration (i.e., with a loss of only $\frac{1}{6}$ of a derivative each time), ultimately leading to $s > \frac{1}{2}$.

\section{Proof of Theorem \ref{LWP3gZK}}

We now turn to the proof of a quadrilinear $X_{s,b}$-estimate. The well-posedness statement in Theorem \ref{LWP3gZK} then follows from this estimate via a fixed-point iteration, which is standard in the literature and thus omitted.

\begin{prop}
For every $s > \frac{1}{2}$, there exists $\epsilon > 0$ such that the quadrilinear estimate
\begin{equation} \label{quadrilinestimate}
\norm{\partial_x\left( \prod_{i=1}^{4}u_i \right)}_{X_{s,-\frac{1}{2}+2\epsilon}} \les \prod_{i=1}^{4} \norm{u_i}_{X_{s,\frac{1}{2}+\eps}}
\end{equation}
holds for all time-localized functions $u_1,u_2,u_3,u_4 \in X_{s,\frac{1}{2}+\eps}$.
\end{prop}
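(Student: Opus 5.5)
By duality, \eqref{quadrilinestimate} is equivalent to the pentalinear bound
\[ \Big| \int \partial_x\Big(\prod_{i=1}^4 u_i\Big)\, \overline{u_0}\, \mathrm{d}(t,x,y) \Big| \les \norm{u_0}_{X_{-s,\frac{1}{2}-2\eps}} \prod_{i=1}^4 \norm{u_i}_{X_{s,\frac12+\eps}}. \]
We pass to the Fourier side, where we may assume all $\widehat{u_i}\ge 0$. We then decompose dyadically in the frequencies $(\xi_i,q_i)$ with $\abs{(\xi_i,q_i)}\sim N_i$. By symmetry we may assume $N_1\ge N_2\ge N_3\ge N_4$. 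Since $(\xi_0,q_0)=\sum_{i=1}^4(\xi_i,q_i)$, we have $N_0\les N_1$, and the derivative $\abs{\xi_0}\les \min(N_0,N_1)$ has to be absorbed. Counting regularity, we have $s$ derivatives on each factor and the weight $N_0^{s}N_1^{-s}\le 1$ from $u_0$. Hence the derivative $\abs{\xi_0}$ must be paid for by one full derivative gained from $u_1$ together with the smoothing estimates, and at least $\frac12+$ derivatives are available on each of $u_2,u_3,u_4$. Low frequencies $N_1\les 1$ are trivial by \eqref{L^4+} and H\"older.

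The case analysis splits according to which factor carries the $\partial_x$.

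\emph{Case A: $N_1\gg N_2$.} We pair $u_1$ with a low-frequency factor, say $u_2$, and bound $MP(u_1,u_2)$ by \eqref{MP}; the other pair $(u_0,u_3u_4)$ goes to $L^2$ via \eqref{MPdual} or via \eqref{L^4-}/\eqref{L^4+}. The multiplier $\abs{\abs{(\xi_1,q_1)}^2-\abs{(\xi_2,q_2)}^2}^{1/2}\sim N_1$ gains a full derivative at the cost of $J_y^{\frac12+}$ on $u_2$, which is affordable since $s>\frac12$. If instead $N_0\sim N_1$ and the gain should be taken against $u_0$, we use \eqref{MPdual} with $u_0$ in $X_{0+,\frac12-}$. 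In either subcase the remaining two factors are placed in $L^4$ by \eqref{L^4+}, costing $0+$ derivatives.

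\emph{Case B: $N_1\sim N_2\gg N_3$.} We group $(u_1,u_3)$ and $(u_2,u_4)$, or $(u_0,u_3)$. We apply \eqref{MP} to one pair and \eqref{MPdual} to the other, gaining $N_1^{1/2}$ from each. The total gain $N_1$ pays for $\partial_x$, and we lose $\frac12+$ on $u_3,u_4$.

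\emph{Case C: $N_1\sim N_2\sim N_3$.} This is the main obstacle and the case the new tool \eqref{bilinref} is meant for. Here $N_1^{3s}\ge N_1^{\frac32+}$ of regularity is available on the three high factors. We first try \eqref{bilinref} on whichever product of two high-frequency functions is supported in the $P^1$ region. This gains $\abs{\xi}^{1/4}$ on that product. We put the other high pair (including $u_0$) in $L^2$ by the dual version, interpolated with \eqref{trivialL^2} as in \eqref{L^4-}, and $u_4$ in $L^\infty$ via Sobolev or in $L^{4+}$. We then check that the gains $\abs{\xi}^{1/4}$ per pair combined with $3s$ derivatives cover $\abs{\xi_0}$.

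When $P^1$ fails for every output frequency, we have $\abs{3\xi^2-q^2}\les\abs{\xi}$ or $\abs{\xi}\les 1$. The case $\abs{\xi_0}\les1$ is easy because there is no derivative loss. Otherwise all relevant frequencies satisfy $\abs{q}\approx\sqrt3\abs{\xi}$, so $\abs{\xi}\sim N$ and the Airy $L^6$-estimate \eqref{AiryL^6} is efficient: $I_x^{1/6}$ gains $N^{1/6}$ and $J_y^{1/3}$ loses $N^{1/3}$, for a net loss of $\frac16$. We place $u_1,u_2,u_3$ each in $L^6$ via \eqref{AiryL^6+} with net loss $\frac16$. For $u_0u_4$ in $L^2$ we use \eqref{AiryL^6-} or \eqref{optimizedL^6-} on $u_0$ and \eqref{optimizedL^6+}/\eqref{trivialLinfty}-type interpolation on $u_4$, adjusting H\"older exponents as $\frac1{6+}\cdot3+\frac1{2-}=1$.

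The key computation is that we need $3s\ge 1+3\cdot\frac16+$ minus the gain from $\abs{\xi_0}\les N$ absorbed with the dual weight. This gives exactly $s>\frac12$. We must also ensure that the $b$-indices $\frac12-2\eps$ on $u_0$ remain admissible in the interpolated estimates. This is the step I expect to require the most careful bookkeeping, in particular the precise geometric description of the complement of $P^1$ for the product frequencies versus the individual frequencies.
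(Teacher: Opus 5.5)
There is a genuine gap, and it sits in your Case C, which is the heart of the proof. Your tools and the numerology $3s>1+3\cdot\frac16$ are right, but the case split that is supposed to realize them is not.

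The dichotomy ``\eqref{bilinref} applies to some product'' versus ``$P^1$ fails for every product'' does not close. The $P^1$ condition only constrains sum frequencies, and its failure says nothing about the individual $(\xi_i,q_i)$. Take $(\xi_1,q_1)=(\frac{N}{2\sqrt3},-\frac{3N}{2})$ and $(\xi_k,q_k)=(0,N)$ for $k=2,3,4$; all norms are $\sim N$, so this is Case C. Every pair sum is $(\frac{N}{2\sqrt3},-\frac N2)$ or $(0,2N)$. Every triple sum (the frequency of $\overline{f}u_k$ up to sign) is $(\frac{N}{2\sqrt3},\frac N2)$ or $(0,3N)$. Each has $\xi=0$ or $3\xi^2=q^2$, so no product lies in the $P^1$ region. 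Yet $\abs{\xi_0}=\frac{N}{2\sqrt3}\sim N$, so this is not your easy case.

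Your inference that all relevant frequencies satisfy $\abs{q}\approx\sqrt3\abs{\xi}$ with $\abs{\xi}\sim N$ is therefore false here. Since $u_2,u_3,u_4$ have $\xi_k=0$, the Airy estimate gains nothing on them, and only $f$ and $u_1$ can profit from $I_x^{1/6}$. For example, Airy on $f,u_1$, $L^4$ on $u_2,u_3$ and the optimized $L^6$ bound on $u_4$ only gives $s>\frac{14}{27}$.

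The other branch has a separate problem. The gain $\abs{\xi_i+\xi_j}^{1/4}$ can be $O(1)$; a sum frequency such as $(1,2N)$ lies in the $P^1$ region. Even one full $N^{1/4}$, combined with $u_4\in L^\infty$ at the price of a whole derivative, only closes for $s\geq\frac7{12}$. Your count tacitly needs two full gains at once, one of them from a dual version of \eqref{bilinref} at $b=\frac12-2\eps$ that you have not established.

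What is missing is the paper's order of operations. In this regime one first applies \eqref{MP} or \eqref{MPdual} whenever two of the five frequencies, $i\neq j\in\{0,\dots,4\}$, satisfy $\abs{\abs{(\xi_i,q_i)}^2-\abs{(\xi_j,q_j)}^2}\gtrsim N^2$. This includes every case with $N_4\ll N_1$. One then gains a full derivative and closes with the $L^6$ bounds; in the example above, $\abs{(\xi_1,q_1)}^2-\abs{(\xi_2,q_2)}^2=\frac32N^2$.

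Only when all five norms nearly coincide does geometry enter. Then $q_0=q_1+\dots+q_4$ forces $\abs{\xi_{\max}}\sim N$, and there are two subcases.
\begin{enumerate}
\item If $\abs{\xi_{\max}}$ dominates the other three $\abs{\xi_i}$, a sign argument on the $q_i$ yields a pair $(u_i,u_j)$, with $u_j$ carrying $\xi_{\max}$ and $q_iq_j>0$, such that $\abs{\xi_i+\xi_j}\sim N$ and $\abs{3(\xi_i+\xi_j)^2-(q_i+q_j)^2}\sim N^2$. Then \eqref{bilinref} gives the full $N^{1/4}$, which is combined with the Airy bound on $f$ and the optimized $L^6$ bound on the remaining two factors.
\item If the two largest $\abs{\xi_i}$ are comparable, they are $\sim N\gtrsim\abs{\xi_0}$. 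The Airy estimate is then applied to $f$ and to these two factors (not to $u_1,u_2,u_3$), and the other two go to $L^4$.
\end{enumerate}

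Cases A and B are repairable, but your H\"older exponents do not add up as written. For instance, \eqref{MP} on $(u_1,u_2)$ with $u_3,u_4\in L^4$ would force $f\in L^\infty$, which is impossible for $f\in X_{0,\frac12-2\eps}$. The paper instead pairs $(u_1,u_4)$ via \eqref{MP} with $(f,u_3)$ via \eqref{MPdual} when $N_0\gg N_3$, and uses the Airy bound on $f$ when $N_0\les N_3$.
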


\begin{proof}
Let $\ast$ denote the convolution constraint $(\tau_0,\xi_0,q_0) \coloneqq (\tau,\xi,q) = (\tau_1+\tau_2+\tau_3+\tau_4,\xi_1+\xi_2+\xi_3+\xi_4,q_1+q_2+q_3+q_4)$. Then, by duality and after applying Parseval's identity, we can write
\begin{align*}
&\norm{\partial_x \left(\prod_{i=1}^{4} u_i \right)}_{X_{s,-\frac{1}{2}+2\eps}} \sim \sup_{\norm{f}_{X_{0,\frac{1}{2}-2\eps}} \leq 1} \left| \int_{\R^2} \sum_{q \in \Z} \int_{\R^6} \sum_{\substack{q_1,q_2,q_3 \in \Z \\ \ast}} \xi \langle (\xi,q) \rangle^s \overline{\widehat{f}}(\tau,\xi,q) \right. \\ & \left. \cdot \prod_{i=1}^{4} \widehat{u}_i(\tau_i,\xi_i,q_i) \ \mathrm{d}(\tau_1,\tau_2,\tau_3,\xi_1,\xi_2,\xi_3) \mathrm{d}(\tau,\xi) \right| \eqqcolon \sup_{\norm{f}_{X_{0,\frac{1}{2}-2\eps}} \leq 1} I_f,
\end{align*}
and since the norms appearing on the right-hand side of estimate \eqref{quadrilinestimate} depend only on the modulus of the Fourier transform, we may, without loss of generality, assume that $\widehat{f}, \widehat{u}_i \geq 0$ for all $i \in \set{1,2,3,4}$. Moreover, in the forthcoming discussion, we may restrict ourselves, by symmetry, to the frequency configuration $\abs{(\xi_1,q_1)} \geq \abs{(\xi_2,q_2)} \geq \abs{(\xi_3,q_3)} \geq \abs{(\xi_4,q_4)}$, and we finally fix the notation $\widetilde{u}(t,x,y) = u(-t,-x,-y)$ and note that the $X_{s,b}$-norms under consideration are reflection-invariant, since the phase function is odd. With this, we conclude the preliminary remarks and turn to a detailed case-by-case analysis. \\ \\
(i) \underline{$\abs{(\xi_1,q_1)} \les 1$}: \\
In this case, the pointwise estimate
\[ \abs{\xi_0} \langle (\xi_0,q_0) \rangle^s \les_s \prod_{i=1}^{4} \langle (\xi_i,q_i) \rangle^s \]
holds for every $s \in \R$, so that undoing Plancherel, followed by an application of Hölder's inequality, leads us to
\begin{align*}
I_f &\les \norm{f}_{L_{txy}^{2}} \norm{J^su_1}_{L_{txy}^{4}} \norm{J^su_2}_{L_{txy}^{4}} \norm{J^su_3}_{L_{txy}^{\infty}} \norm{J^su_4}_{L_{txy}^\infty} \\ &\les \norm{f}_{X_{0,\frac{1}{2}-2\epsilon}} \prod_{i=1}^{4} \norm{u_i}_{X_{s,\frac{1}{2}+\eps}},
\end{align*}
where, in the final step, we have assumed $0 \leq \frac{1}{2}-2\eps$ and made use of \eqref{L^4} and the trivial bound \eqref{trivialLinfty}, with all occurring derivative losses being negligible due to assumption (i). \\
(ii) \underline{$\abs{(\xi_1,q_1)} \gg 1$}: \\
(ii.1) \underline{$\abs{(\xi_1,q_1)} \gg \abs{(\xi_4,q_4)}$}: \\
(ii.1.1) \underline{$\abs{(\xi_0,q_0)} \gg \abs{(\xi_3,q_3)}$}: \\
In this situation, we have
\begin{align*}
 \abs{\xi_0} \langle (\xi_0,q_0) \rangle^s &\les \abs{\abs{(\xi_1,q_1)}^2-\abs{(\xi_4,q_4)}^2}^{\frac{1}{2}} \langle (\xi_1,q_1) \rangle^s \langle (\xi_4,q_4) \rangle^{-\frac{1}{6}} \\ & \ \ \ \cdot \abs{\abs{(\xi_0,q_0)}^2-\abs{(\xi_3,q_3)}^2}^{\frac{1}{2}} \langle (\xi_0,q_0) \rangle^{0-} \langle (\xi_3,q_3) \rangle^{-\frac{1}{6}} \langle (\xi_2,q_2) \rangle^{-\frac{2}{3}+},
 \end{align*}
 and after undoing Plancherel and applying Hölder's inequality, we obtain
\[ I_f \les \norm{MP(J^su_1,J^{-\frac{1}{6}}u_4)}_{L_{txy}^2} \norm{MP(J^{0-}f,J^{-\frac{1}{6}}\widetilde{u}_3)}_{L_{txy}^2} \norm{J^{-\frac{2}{3}+}\widetilde{u}_2}_{L_{txy}^\infty}. \]
We then estimate the first two factors using \eqref{MP} and \eqref{MPdual}, respectively, and treat the last one by means of the Sobolev embedding theorem \eqref{trivialLinfty}. This yields
\begin{align*}
... &\les \norm{f}_{X_{0,\frac{1}{2}-2\eps}} \norm{u_1}_{X_{s,\frac{1}{2}+\eps}} \norm{u_2}_{X_{\frac{1}{3}+,\frac{1}{2}+\eps}} \norm{u_3}_{X_{\frac{1}{3}+,\frac{1}{2}+\eps}} \norm{u_4}_{X_{\frac{1}{3}+,\frac{1}{2}+\eps}} \\ & \les \prod_{i=1}^{4} \norm{u_i}_{X_{s,\frac{1}{2}+\eps}},
\end{align*}
and the last step works for every $s> \frac{1}{3}$, provided that $\eps > 0$ is chosen sufficiently small. \\
(ii.1.2) \underline{$\abs{(\xi_0,q_0)} \les \abs{(\xi_3,q_3)}$}: \\
The active assumptions allow us to infer the pointwise bound
\begin{align*}
\abs{\xi_0} \langle (\xi_0,q_0) \rangle^s &\les \abs{\abs{(\xi_1,q_1)}^2 - \abs{(\xi_4,q_4)}^2}^\frac{1}{2} \langle (\xi_1,q_1) \rangle^s \langle (\xi_4,q_4) \rangle^{-\frac{7}{54}} \\ & \ \ \ \cdot \abs{\xi_0}^\frac{1}{6} \langle (\xi_0,q_0) \rangle^{-\frac{1}{3}-} \langle (\xi_2,q_2) \rangle^{\frac{4}{27}+} \langle (\xi_3,q_3) \rangle^{\frac{4}{27}+},
\end{align*}
and after applying Parseval's identity, followed by Hölder's inequality, we obtain
\[ I_f \les \norm{MP(J^su_1,J^{-\frac{7}{54}}u_4)}_{L_{txy}^2} \norm{I_x^\frac{1}{6}J^{-\frac{1}{3}-}f}_{L_{txy}^{6-}} \norm{J^{\frac{4}{27}+}\widetilde{u}_2}_{L_{Txy}^{6+}} \norm{J^{\frac{4}{27}+}\widetilde{u}_3}_{L_{Txy}^{6+}}. \]
The first factor can now be treated using \eqref{MP}, the second using \eqref{AiryL^6-}, and the remaining two factors can be estimated by means of \eqref{optimizedL^6+}. This leads us to conclude
\begin{align*}
...&\les \norm{f}_{X_{0,\frac{1}{2}-2\eps}} \prod_{i=2}^{4} \norm{u_1}_{X_{s,\frac{1}{2}+\eps}} \norm{u_i}_{X_{\frac{10}{27}+,\frac{1}{2}+\eps}} \\ &\les \prod_{i=1}^{4} \norm{u_i}_{X_{s,\frac{1}{2}+\eps}},
\end{align*}
with the last step being valid for all $s > \frac{10}{27}$, provided that $\eps > 0$ is chosen small enough. \\
(ii.2) \underline{$\abs{(\xi_1,q_1)} \sim \abs{(\xi_2,q_2)} \sim \abs{(\xi_3,q_3)} \sim \abs{(\xi_4,q_4)}$}: \\
(ii.2.1) \underline{$\abs{\abs{(\xi_i,q_i)}^2-\abs{(\xi_j,q_j)}^2} \gtrsim \abs{(\xi_1,q_1)}^2$ for some tuple $(i,j) \in \set{0,...,4}^2$, $i \neq j$}: \\
The cases corresponding to tuples $(i,j)$ with $i\neq 0$ and $j \neq 0$ can be treated in exactly the same way as in case (ii.1.2), since $\langle (\xi_0,q_0) \rangle \les \langle (\xi_k,q_k) \rangle$ holds for all $k \in \set{1,2,3,4}$. It therefore remains to examine what happens, for instance, in the case $(i,j) = (0,1)$ (all remaining cases can be treated analogously): Under the active assumptions (ii.2) and (ii.2.1), we have the pointwise bound
\begin{align*}
\abs{\xi_0} \langle (\xi_0,q_0) \rangle^s &\les \abs{\abs{(\xi_1,q_1)}^2-\abs{(\xi_0,q_0)}^2}^\frac{1}{2} \langle (\xi_1,q_1) \rangle^{s-} \langle (\xi_0,q_0) \rangle^{-\frac{1}{2}-} \prod_{k=2}^{4} \langle (\xi_k,q_k) \rangle^{\frac{1}{6}+},
\end{align*}
which allows us to conclude that
\[ I_f \les \norm{MP(J^{s-}\widetilde{u}_1,J^{-\frac{1}{2}-}f)}_{L_{txy}^2} \prod_{k=2}^{4} \norm{J^{\frac{1}{6}+}u_k}_{L_{Txy}^6} \]
(Plancherel's theorem and Hölder's inequality). The first factor is now estimated using \eqref{MPdual}, while all remaining factors can be handled by means of \eqref{optimizedL^6}. We finally obtain
\begin{align*}
...&\les \norm{f}_{X_{0,\frac{1}{2}-2\eps}} \norm{u_1}_{X_{s,\frac{1}{2}+\eps}} \prod_{k=2}^{4} \norm{u_k}_{X_{\frac{7}{18}+,\frac{1}{2}+\eps}} \\ &\les \prod_{k=1}^{4} \norm{u_k}_{X_{s,\frac{1}{2}+\eps}}, 
\end{align*}
and the last step is valid for every $s > \frac{7}{18}$, provided that $\eps > 0$ is chosen sufficiently small. This completes the discussion of this subcase. \\
(ii.2.2) \underline{$\abs{\abs{(\xi_i,q_i)}^2-\abs{(\xi_j,q_j)}^2} \ll \abs{(\xi_1,q_1)}^2$ for all tuples $(i,j) \in \set{0,...,4}^2$}: \\
We now arrange the $\xi_i$ for $i \in \set{1,2,3,4}$ in ascending order of their modulus and consider two cases. \\
(ii.2.2.1) \underline{$\abs{\xi_{\text{min}}} \leq \abs{\xi_{\text{med}_1}} \leq \abs{\xi_{\text{med}_2}} \ll \abs{\xi_{\text{max}}}$}: \\
If $\abs{\xi_{\text{max}}} \ll \abs{(\xi_1,q_1)}$, then (ii.2.2) and (ii.2.2.1) would imply
\[ \abs{q_0} \sim \abs{q_1} \sim \abs{q_2} \sim \abs{q_3} \sim \abs{q_4} \sim \abs{(\xi_1,q_1)} \]
with
\[ \abs{\abs{q_i} - \abs{(\xi_1,q_1)}} \ll \abs{(\xi_1,q_1)} \quad \text{for all} \quad i \in \set{0,1,2,3,4}, \]
which would contradict $q_1+q_2+q_3+q_4=q_0$. Hence, we must have $\abs{\xi_{\text{max}}} \sim \abs{(\xi_1,q_1)}$. Without loss of generality, let $(\abs{\xi_{\text{min}}},\abs{\xi_{\text{med}_1}},\abs{\xi_{\text{med}_2}},\abs{\xi_{\text{max}}}) = (\abs{\xi_1},\abs{\xi_2},\abs{\xi_3},\abs{\xi_4})$. The active assumptions (ii.2.2) and (ii.2.2.1) then lead to the relations
\begin{equation} \label{relations1}
\begin{aligned}
\abs{q_1} \sim \abs{q_2} \sim \abs{q_3} \sim &\abs{(\xi_1,q_1)} \quad \text{with} \quad \abs{\abs{q_i} - \abs{(\xi_1,q_1)}} \ll \abs{(\xi_1,q_1)} \\ &\quad \text{for all} \quad i \in \set{1,2,3},
\end{aligned}
\end{equation}
from which it follows (note assumption (ii.2.2)) that we must also have
\begin{equation} \label{relations2}
\abs{q_0} \sim \abs{q_4} \sim \abs{(\xi_1,q_1)} \quad \text{with} \quad \abs{\abs{q_0}-\abs{q_4}} \ll \abs{(\xi_1,q_1)}
\end{equation}
(otherwise one would obtain $\abs{q_0},\abs{q_4} \ll \abs{(\xi_1,q_1)}$, which is incompatible with the relations in \eqref{relations1} and the convolution constraint $q_1+q_2+q_3+q_4=q_0$). Now, combining the relations from \eqref{relations1} and \eqref{relations2} and taking the convolution constraint into account, it follows that $q_4$ and at least one other $q_i$ for $i \in \set{1,2,3}$ must have the same sign; without loss of generality, we take $q_1$ and $q_4$. For this configuration, we then have
\[ \abs{3(\xi_1+\xi_4)^2-(q_1+q_4)^2} \sim \abs{(\xi_1,q_1)}^2 \gg \abs{\xi_1+\xi_4} \sim \abs{\xi_4} \sim \abs{(\xi_1,q_1)} \gg 1, \]
so that we may infer the pointwise bound
\begin{align*}
\abs{\xi_0} \langle (\xi_0,q_0) \rangle^s &\les \abs{\xi_1+\xi_4}^\frac{1}{4} \langle (\xi_1,q_1) \rangle^{s-} \langle (\xi_4,q_4) \rangle^{\frac{49}{108}+} \abs{\xi_0}^\frac{1}{6} \langle (\xi_0,q_0) \rangle^{-\frac{1}{3}-} \\ & \ \ \ \cdot \langle (\xi_2,q_2) \rangle^{\frac{25}{108}+} \langle (\xi_3,q_3) \rangle^{\frac{25}{108}+}.
\end{align*}
Undoing Plancherel, followed by an application of Hölder's inequality then yields
\[ I_f \les \norm{I_x^\frac{1}{4}P^1(J^{s-}u_1J^{\frac{49}{108}+}u_4)}_{L_{txy}^2} \norm{I_x^\frac{1}{6}J^{-\frac{1}{3}-}f}_{L_{txy}^{6-}} \norm{J^{\frac{25}{108}+}\widetilde{u}_2}_{L_{Txy}^{6+}} \norm{J^{\frac{25}{108}+}\widetilde{u}_3}_{L_{Txy}^{6+}}, \]
and the first factor is handled using \eqref{bilinref}, while the remaining factors can be estimated using \eqref{AiryL^6-} and \eqref{optimizedL^6+}. We thus arrive at
\begin{align*}
...&\les \norm{f}_{X_{0,\frac{1}{2}-2\eps}} \norm{u_1}_{X_{s,\frac{1}{2}+\eps}} \prod_{i=2}^{4} \norm{u_i}_{X_{\frac{49}{108}+,\frac{1}{2}+\eps}} \\ & \les \prod_{i=1}^{4} \norm{u_i}_{X_{s,\frac{1}{2}+\eps}},
\end{align*}
which is valid for every $s > \frac{49}{108}$, provided that $\eps > 0$ is chosen small enough. \\
(ii.2.2.2) \underline{$\abs{\xi_{\text{min}}} \leq \abs{\xi_{\text{med}_1}} \leq \abs{\xi_{\text{med}_2}} \sim \abs{\xi_{\text{max}}}$}: \\ 
We may again, without loss of generality, assume $\abs{\xi_{\text{med}_2}} = \abs{\xi_3}$ and $\abs{\xi_{\text{max}}} = \abs{\xi_4}$. Due to the active assumption (ii.2.2.2), we then have both $\abs{\xi_0} \les \abs{\xi_3}$ and $\abs{\xi_0} \les \abs{\xi_4}$, so that we may write
\begin{align*}
\abs{\xi_0} \langle (\xi_0,q_0) \rangle^{s} &\les \langle (\xi_1,q_1) \rangle^{s-} \langle (\xi_2,q_2) \rangle^{\frac{1}{2}+} \abs{\xi_0}^\frac{1}{6} \langle (\xi_0,q_0) \rangle^{-\frac{1}{3}-} \abs{\xi_3}^\frac{1}{6} \langle (\xi_3,q_3) \rangle^{\frac{1}{6}+} \\ & \ \ \ \cdot \abs{\xi_4}^\frac{1}{6} \langle (\xi_4,q_4) \rangle^{\frac{1}{6}+}.
\end{align*}
By undoing Parseval's identity and a subsequent application of Hölder's inequality, it follows that
\[ I_f \les \norm{J^{s-}u_1}_{L_{txy}^4} \norm{J^{\frac{1}{2}+}u_2}_{L_{txy}^4} \norm{I_x^\frac{1}{6}J^{-\frac{1}{3}-}f}_{L_{txy}^{6-}} \norm{I_x^\frac{1}{6}J^{\frac{1}{6}+}u_3}_{L_{txy}^{6+}}  \norm{I_x^\frac{1}{6}J^{\frac{1}{6}+}u_4}_{L_{txy}^{6+}}, \]
and for the first two factors we employ \eqref{L^4}, while the remaining factors are treated using \eqref{AiryL^6-} and \eqref{AiryL^6+}. We ultimately obtain
\begin{align*}
...&\les \norm{f}_{X_{0,\frac{1}{2}-2\eps}} \norm{u_1}_{X_{s,\frac{1}{2}+\eps}} \prod_{i=2}^{4} \norm{u_i}_{X_{\frac{1}{2}+,\frac{1}{2}+\eps}} \\ &\les \prod_{i=1}^{4} \norm{u_i}_{X_{s,\frac{1}{2}+\eps}},
\end{align*}
with the last step being valid for every $s > \frac{1}{2}$, provided that $\eps > 0$ is chosen small enough. This completes the proof of \eqref{quadrilinestimate}.
\end{proof}

\ew

\bibliographystyle{amsplain}

\bibliography{refsquarticZK}

\end{document}